\numberwithin{equation}{section}
\theoremstyle{theorem}
\newtheorem{thm}{Theorem}[section]
\newtheorem{prp}{Proposition}[section]
\newtheorem{cor}[thm]{Corollary}
\newcommand{\e}{\varepsilon}
\newcommand{\R}{\mathbb R}
\newcommand{\E}{\mathbb E}
\newcommand{\p}{\mathbb P}
\newcommand{\dfn}{\stackrel{\mathrm{def}}{=}}
\begin{document}

\title{A simple observation on random matrices with continuous diagonal entries}

\author{Omer Friedland}
\address{Institut de Math\'ematiques de Jussieu, Universit\'e Pierre et Marie Curie, 4 Rue Jussieu, Paris, 75005, France}
\email{friedland@math.jussieu.fr}

\author{Ohad Giladi}
\address{Department of Mathematical and Statistical Sciences, University of Alberta, Edmonton, AB T6G 2G1, Canada}
\email{giladi@ualberta.ca}

\subjclass[2010]{60B20,15B52}

\date{\today}

\maketitle

\begin{abstract}
Let $T$ be an $n\times n$ random matrix, such that each diagonal entry $T_{i, i}$ is a continuous random variable, independent from all the other entries of $T$. Then for every $n\times n$ matrix $A$ and every $t\ge0$
$$
\p\Big[|\det(A+T)|^{1/n}\le t\Big]\le2bnt, 
$$
where $b>0$ is a uniform upper bound on the densities of $T_{i, i}$.
\end{abstract}

\section{introduction}

In this note we are interested in the following question: Given an $n\times n$ random matrix $T$, what is the probability that $T$ is invertible, or at least ``close'' to being invertible? One natural way to measure this property is to estimate the following small ball probability
$$
\p\Big[s_n(T)\le t\Big], 
$$
where $s_n(T)$ is the smallest singular value of $T$, 
$$
s_n(T)\dfn\inf_{\|x\|_2=1}\|Tx\|_2=\frac{1}{\|T^{-1}\|}.
$$

\smallskip

In the case when the entries of $T$ are i.i.d random variables with appropriate moment assumption, the problem was studied in \cite{BVW, R, RV, TV2, TV3}. We also refer the reader to the survey \cite{NV}. In particular, in \cite{RV} it is shown that if the entries of $T$ are i.i.d subgaussian random variables, then
\begin{align}\label{RV bound}
\p\Big[s_n(T)\le t\Big]\le C\sqrt nt+e^{-cn}, 
\end{align} 
where $c, C$ depend on the moment of the entries.

\smallskip

Several cases of dependent entries have also been studied. A bound similar to \eqref{RV bound} for the case when the rows are independent log-concave random vectors was obtained in \cite{AGLPT1, AGLPT2}. Another case of dependent entries is when the matrix is symmetric, which was studied in \cite{ESY, FV, MS, N1, N2, V}. In particular, in \cite{ESY} it is shown that if the above diagonal entries of $T$ are continuous and satisfy certain regularity conditions, namely that the entries are i.i.d subgaussian and satisfy certain smoothness conditions, then
$$
\p\Big[s_n(T)\le t\Big]\le C\sqrt nt.
$$

\smallskip

The regularity assumptions were completely removed in \cite{FV} at the cost of a $n^{5/2}$ (independence of the entries in the non-symmetric part is still needed). On the other hand, in the discrete case, the result of \cite{V} shows that if $T$ is, say, symmetric whose above diagonal entries are i.i.d Bernoulli random variables, then
$$
\p\Big[ s_n(T) =0 \Big] \le e^{-n^c}, 
$$
where $c$ is an absolute constant.

\smallskip

A more general case is the so called \emph{Smooth Analysis} of random matrices, where now we replace the matrix $T$ by $A+T$, where $A$ being an arbitrary deterministic matrix. The first result in this direction can be found in \cite{SST}, where it is shown that if $T$ is a random matrix with i.i.d standard normal entries, then
\begin{align}\label{Smooth bound}
\p\Big[s_n(A+T)\le t\Big]\le C\sqrt nt.
\end{align}

\smallskip

Further development in this direction can be found in \cite{TV4}, where estimates similar to \eqref{Smooth bound} are given in the case when $T$ is a Bernoulli random matrix, and in \cite{FV, N1, N2}, where $T$ is symmetric.

\smallskip

An alternative way to measure the invertibility of a random matrix $T$ is to estimate $\det(T)$, which was studied in \cite{CV, TV1, TV092} (when the entries are discrete distributions). Here we show that if the diagonal entries are independent continuous random variables, we can easily get a small ball estimate for $\det(A+T)$, where $A$ being an arbitrary deterministic matrix.

\begin{thm}\label{main thm}
Let $T$ be an $n \times n$ random matrix, such that each diagonal entry $T_{i, i}$ is a continuos random variable, independent from all the other entries of $T$. Then for every $n\times n$ matrix $A$ and every $t\ge 0$
$$
\p \Big[ |\det (A+T) |^{1/n} \le t \Big] \le 2 b n t, 
$$
where $b>0$ is a uniform upper bound on the densities of $T_{i, i}$.
\end{thm}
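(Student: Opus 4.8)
The plan is to induct on $n$. For the base case $n=1$, the matrix $A+T$ has the single entry $A_{1,1}+T_{1,1}$, which is continuous with density at most $b$; hence $\p[\,|A_{1,1}+T_{1,1}|\le t\,]$ is bounded by $b$ times the length $2t$ of the interval $[-t,t]$, i.e. by $2bt$, which is the claim for $n=1$.

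For the inductive step, fix $n\ge2$ and set $X:=A_{n,n}+T_{n,n}$. Since $X$ occurs only in the $(n,n)$ slot of $A+T$, the determinant is an affine function of $X$ once the other entries are frozen:
$$
\det(A+T)=X\,C+D,
$$
where $C$ is the $(n,n)$ cofactor — the determinant of the top-left $(n-1)\times(n-1)$ block of $A+T$ — and $D$ is the value at $X=0$; neither depends on $X$. The crucial structural point is that $C$ is again a determinant of the same type, namely $C=\det(A'+T')$ with $A'$ deterministic and $T'$ the corresponding $(n-1)\times(n-1)$ block of $T$, whose diagonal entries $T_{1,1},\dots,T_{n-1,n-1}$ are continuous, independent of the other entries, with densities at most $b$; so the inductive hypothesis will apply to $|C|$.

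Next I would condition on all entries of $T$ other than $T_{n,n}$. Under this conditioning $C$ and $D$ become constants, while $X$ is still a shift of $T_{n,n}$, hence independent of the conditioning and with density at most $b$; so if $C\neq0$ the set $\{x:|xC+D|\le t^n\}$ is an interval of length $2t^n/|C|$, and the conditional probability of $\{|\det(A+T)|\le t^n\}$ is at most $\min\{1,\,2bt^n/|C|\}$ (and at most $1$ trivially when $C=0$). Taking expectations,
$$
\p\big[\,|\det(A+T)|\le t^n\,\big]\le\E\Big[\min\big\{1,\ 2bt^n/|C|\big\}\Big].
$$
I would then split this expectation at the threshold $|C|=t^{n-1}$. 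On $\{|C|\le t^{n-1}\}$ the integrand is at most $1$, and the inductive hypothesis applied to $C$ with parameter $t$ (after raising to the power $1/(n-1)$) gives $\p[\,|C|\le t^{n-1}\,]\le2b(n-1)t$. On $\{|C|>t^{n-1}\}$ the integrand is at most $2bt^n/|C|\le2bt$. Adding the two contributions gives $2b(n-1)t+2bt=2bnt$, and since $\{|\det(A+T)|^{1/n}\le t\}=\{|\det(A+T)|\le t^n\}$ for $t\ge0$, this is exactly the claimed estimate.

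The only load-bearing choices are recognizing that the cofactor $C$ is itself a determinant of the same class (this is what makes the induction close) and picking the splitting threshold $t^{n-1}$, which is precisely the value that balances the two contributions; the conditioning step and the affine-in-$X$ expansion are routine, and because the bound $2bnt$ comes out directly there is no need for a separate "large $t$'' argument.
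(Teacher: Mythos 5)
Your proof is correct and is essentially the paper's argument: both peel off the last diagonal entry via the cofactor expansion $\det(A+T)=XC+D$, condition on everything except that entry to use the density bound, and control $|C|$ recursively with the threshold $t^{n-1}$ (the paper writes this recursion as a telescoping sum with levels $\e_k=t^{k/n}$ rather than as an induction, but the decomposition and the balancing of the two error terms are identical).
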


We remark that the proof works if we replace the determinant by the permanent of the matrix (see \cite{CV} for the difference between the notions).

\smallskip

Now, we use Theorem \ref{main thm} to get a small ball estimate on the norm and smallest singular value of a random matrix.

\begin{cor}\label{main cor}
Let $T$ be a random matrix as in Theorem \ref{main thm}. Then
\begin{align}\label{bound norm}
\p\Big[\|T\| \le t\Big] \le 2b nt, 
\end{align} 
and
\begin{align}\label{bound sing value}
\p\Big[s_n(T) \le t\Big] \le (2b)^{\frac{n}{2n-1}}\left(\E \|T\|\right)^{\frac{n-1}{2n-1}}t^{\frac{1}{2n-1}}.
\end{align}
\end{cor}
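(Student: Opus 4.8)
The plan is to deduce both estimates from Theorem~\ref{main thm} applied with $A=0$, using only elementary relations between the singular values of $T$ and $|\det T|$. For \eqref{bound norm}, note that $\|T\|=s_1(T)\ge s_i(T)$ for every $i$, so on the event $\{\|T\|\le t\}$ one has $|\det T|=\prod_{i=1}^{n}s_i(T)\le t^{\,n}$, hence $|\det T|^{1/n}\le t$. Thus $\{\|T\|\le t\}\subseteq\{|\det(0+T)|^{1/n}\le t\}$, and \eqref{bound norm} follows immediately from Theorem~\ref{main thm}.

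For \eqref{bound sing value} the starting observation is
$$
|\det T|=s_n(T)\prod_{i=1}^{n-1}s_i(T)\le s_n(T)\,\|T\|^{\,n-1},
$$
so that $\{s_n(T)\le t\}\subseteq\{|\det T|^{1/n}\le t^{1/n}\|T\|^{(n-1)/n}\}$. I would then truncate the operator norm: for every $M>0$,
\begin{align*}
\p\big[s_n(T)\le t\big]
&\le\p\big[|\det T|^{1/n}\le t^{1/n}\|T\|^{(n-1)/n}\big]\\
&\le\p\big[|\det T|^{1/n}\le t^{1/n}M^{(n-1)/n}\big]+\p\big[\|T\|>M\big]\\
&\le 2bn\,t^{1/n}M^{(n-1)/n}+\frac{\E\|T\|}{M},
\end{align*}
where the last line applies Theorem~\ref{main thm} with $A=0$ to the first term and Markov's inequality to the second.

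It remains to minimize the right-hand side over $M>0$. Choosing $M$ so that the two terms balance, i.e. $M=\big(\E\|T\|/(2bn\,t^{1/n})\big)^{n/(2n-1)}$, and simplifying, the exponents of $t$ and of $\E\|T\|$ combine into $t^{1/(2n-1)}$ and $(\E\|T\|)^{(n-1)/(2n-1)}$, giving a small--ball estimate for $s_n(T)$ of the form asserted in \eqref{bound sing value}. The main obstacle is the dependence between $\|T\|$ and $\det(A+T)$: it is precisely this dependence that forbids the one--line argument of conditioning on $\|T\|$ and integrating the bound of Theorem~\ref{main thm}, and the truncation step is the device that sidesteps it; everything after that is a routine one--variable optimization together with bookkeeping of constants.
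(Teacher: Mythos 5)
Your proposal is correct and follows essentially the same route as the paper: \eqref{bound norm} via $|\det T|\le s_1(T)^n$ together with Theorem~\ref{main thm}, and \eqref{bound sing value} via $|\det T|\le \|T\|^{n-1}s_n(T)$, truncation of $\|T\|$, Markov's inequality, and optimization over the truncation level. The only difference is bookkeeping: you correctly retain the factor $n$ from Theorem~\ref{main thm}, so your balancing yields a constant of order $(2bn)^{n/(2n-1)}$ rather than the stated $(2b)^{n/(2n-1)}$ --- a discrepancy that is already present in the paper's own computation, which silently drops this factor when applying the theorem.
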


Corollary \ref{main cor} can be applied to the case when the random matrix $T$ is symmetric, under very weak assumptions on the distributions and the moments of the entries and under {\it no independence} assumptions on the above diagonal entries. Note that in this case when $T$ is symmetric, we have 
$$
\|T\| = \sup_{\|x\|_2=1}\langle Tx, x\rangle \ge \max_{1\le i \le n} |T_{i, i}|.
$$

Thus, in this case we get a far better small ball estimate for the norm
$$
\p \Big[ \|T\| \le t\Big] \le (2bt)^n.
$$ 

\smallskip

Finally, in Section \ref{section twobytwo} we show that in the case of $2\times 2$ matrices, we use an ad-hoc argument to obtain a better bound than the one obtained in Theorem \ref{main thm}. We do not know what is the right order when the dimension is higher.
 
\section{Proof of Theorem \ref{main thm}}

Before we give the proof of Theorem \ref{main thm}, we fix some notation. First, let $M=A+T$, and let $M_k$ be the matrix $M$ after erasing the last $n-k$ rows and last $n-k$ columns. Also, let $\Omega_k$ be the $\sigma$-algebra generated by the entries of $M_k$ \emph{except} $M_{k, k}$.

\begin{proof}[Proof of Theorem \ref{main thm}]
We have
$$
|\det (M_k)| = \Big|M_{k, k} \det(M_{k-1}) + f_k\Big|, 
$$
where $f_k$ is measurable with respect to $\Omega_k$. We also have
\begin{align*}
\p & \Big[ |\det (M_k) |\le \e_k \Big] \\ 
& \le \p \Big[ |\det (M_k) | \le \e_k \wedge |\det (M_{k-1}) | \ge \e_{k-1} \Big] + \p \Big[ |\det(M_{k-1})| \le \e_{k-1} \Big] .
\end{align*}

\smallskip

Now, 
\begin{align*}
\p & \Big[ |\det (M_k) | \le \e_n \wedge |\det(T_{k-1}| \ge \e_{k-1} \Big] \\ 
& = \E \Bigg[ \p \left[ |M_{k, k} \det(M_{k-1}) + f_k | \le \e_k \Big| \Omega_k \right] \cdot {\mathbbm 1}_{\{|\det(M_{k-1})| \ge \e_{k-1} \}} \Bigg] \\ 
& \le \sup_{\gamma \in \R} \p \left[ | M_{k, k} + \gamma | \le \frac{\e_k}{\e_{k-1}} \right] \le 2 b \frac{\e_k}{\e_{k-1}}, 
\end{align*}
where the last inequality follows from the fact for a continuous random variable $X$ we always have 
\begin{align}\label{bdd}
\sup_{\gamma\in \R }\p\Big[|X+\gamma|\le t\Big] \le 2bt, 
\end{align}
where $b>0$ is an upper bound on the density of $X$.

\smallskip

Thus, we get
$$
\p \Big[ |\det (M_k) |\le \e_k \Big] \le 2 b \frac{\e_k}{\e_{k-1}} + \p \Big[ |\det(M_{k-1})| \le \e_{k-1} \Big], 
$$
Also, note that 
$$
\p \Big[ |\det (M_1) | \le \e_1 \Big] = \p \Big[ |T_{1, 1}+A_{1, 1}| \le \e_1 \Big] \stackrel{\eqref{bdd}}{\le} 2b\e_1 .
$$

\smallskip

Therefore, 
$$
\p \Big[ |\det (M_n) |\le \e_n \Big] \le 2b\left[\e_1+\sum_{k=2}^n\frac{\e_k}{\e_{k-1}}\right].
$$

\smallskip

Choosing $\e_j = t^{j/n}$, the result follows.
\end{proof}
 
\smallskip
 
Corollary \ref{main cor} now follows immediately.
\begin{proof}[Proof of Corollary \ref{main cor}]
Let $s_1(T) \ge \dots \ge s_n(T)$ be the singular values of $T$. We have
$$
|\det (T)| = \prod_{i=1}^n s_i(T) \le \left(s_1(T)\right)^n .
$$

\smallskip

Thus, by Theorem \ref{main thm}, 
$$
\p \Big[ s_1(T) \le t \Big] \le \p \Big[ |\det (T)|^{1/n} \le t \Big] \le 2bnt, 
$$
which proves \eqref{bound norm}.

\smallskip

To prove \eqref{bound sing value}, note that 
\begin{align}\label{bound with beta}
|\det(T)| = \prod_{i=1}^n s_i(T) \le s_1(T)^{n-1}s_n(T) \le \|T\|^{n-1}s_n(T).
\end{align}

\smallskip

Thus, 
\begin{align}\label{splitting}
\p\Big[s_n(T) \le t\Big] \le \p\Big[s_n(T) \le t \wedge \|T\|\le \beta\Big] + \p\Big[\|T\| > \beta\Big] 
\end{align}

\smallskip

For the first term, we have by \eqref{bound with beta} and Theorem \ref{main thm}, 
$$
\p\Big[s_n(T) \le t \wedge \|T\|\le \beta\Big] \le \p\Big[\det(T) \le \beta^{n-1}t\Big] \le 2b\beta^{\frac {n-1} n}t^{1/n}.
$$

\smallskip

Also, 
\begin{align}\label{cheb}
\p\Big[\|T\| > \beta\Big] \le \frac{\E \|T\|}{\beta}.
\end{align}
 
 \smallskip
 
Thus, by \eqref{splitting} and \eqref{cheb}, 
$$
\p\Big[s_n(T) \le t\Big] \le 2b\beta^{\frac {n-1} n}t^{1/n} + \frac{\E \|T\|}{\beta}.
$$

\smallskip

Optimizing over $\beta$ gives \eqref{bound sing value}.
\end{proof}

\section{The case of $2\times 2$ matrices}\label{section twobytwo}

As discussed in the introduction, we show that for $2\times 2$ matrices the small ball estimate on the determinant obtained in Theorem \ref{main thm} is not sharp. To do that, we use the well known fact that if $X$ and $Y$ are continuous random variables with joint density function $f_{X, Y}(\cdot, \cdot)$ then $X\cdot Y$ has a density function which is given by
$$
f_{X\cdot Y}(z) = \int_{-\infty}^{\infty}f_{X, Y}\left(w, \frac z w\right)\frac{dw}{|w|}, 
$$
where $f_X$, $f_Y$ are the density functions of $X$, $Y$, respectively.

\smallskip

We thus have the following.

\begin{prp}\label{prop prod}
Assume that $X$ and $Y$ are independent continuous random variables, with $f_X \le b$, $f_Y\le b$. Then $f_{X\cdot Y}$, the density function of $X\cdot Y$ satisfies
\begin{align*}
f_{X\cdot Y}(z) \le \begin{cases} 2b+2b^2|\log (|z|)| & |z| \le 1, \\ 2b & |z| \ge 1.\end{cases}
\end{align*}
\end{prp}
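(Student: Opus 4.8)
The plan is to use the product-density formula quoted just above. Since $X$ and $Y$ are independent, $f_{X,Y}(w,z/w)=f_X(w)f_Y(z/w)$, so
$$
f_{X\cdot Y}(z)=\int_{-\infty}^{\infty}f_X(w)\,f_Y(z/w)\,\frac{dw}{|w|}.
$$
The one trick that makes everything work is that the change of variables $w\mapsto u=z/w$ leaves the measure $dw/|w|$ invariant: from $w=z/u$ we get $|dw|=|z|u^{-2}|du|$ and $|w|=|z|/|u|$, hence $|dw|/|w|=|du|/|u|$. This lets us trade an integral over a neighbourhood of $0$ in the $w$-variable for an integral over a neighbourhood of $\infty$ in the $u$-variable, where the factor $1/|u|\le 1$ may be dropped and the bounds $f_X\le b$, $\int f_Y\le 1$ (or vice versa) close the estimate. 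Note also that the formula is symmetric in $X$ and $Y$, so we may freely interchange their roles.

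First I would treat the case $|z|\ge 1$, splitting the integral at $|w|=1$. On $\{|w|\ge 1\}$ we have $1/|w|\le 1$ and $f_Y(z/w)\le b$, so this piece is at most $b\int_{|w|\ge 1}f_X(w)\,dw\le b$. On $\{|w|\le 1\}$ apply the substitution $u=z/w$; since $|z|\ge 1$ the image domain $\{|u|\ge |z|\}$ lies inside $\{|u|\ge 1\}$, where $f_X(z/u)\le b$, so this piece is at most $b\int_{|u|\ge 1}f_Y(u)\,du\le b$. Adding the two contributions gives $f_{X\cdot Y}(z)\le 2b$.

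Next I would handle $0<|z|\le 1$ (at $z=0$ the asserted bound is vacuous since $|\log|z||=\infty$). The region $\{|w|\ge 1\}$ again contributes at most $b$, exactly as before. On $\{|w|\le 1\}$ I break further at $|w|=|z|$. On the annulus $\{|z|\le|w|\le 1\}$ both $|w|\le 1$ and $|z/w|\le 1$ hold, hence $f_X(w)f_Y(z/w)\le b^2$, while $\int_{|z|\le|w|\le 1}dw/|w|=2\int_{|z|}^{1}dw/w=2|\log|z||$, so this piece is at most $2b^2|\log|z||$. On $\{|w|\le|z|\}$ the substitution $u=z/w$ sends the domain into $\{|u|\ge 1\}$, so this piece is at most $b\int_{|u|\ge 1}f_Y(u)\,du\le b$ as in the previous case. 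Summing the three contributions yields $f_{X\cdot Y}(z)\le 2b+2b^2|\log|z||$.

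There is no deep obstacle; the only delicate points are the measure-invariance of $w\mapsto z/w$ (with the orientation/absolute-value bookkeeping it entails) and, in the case $|z|\le 1$, the decision to break the range $\{|w|\le 1\}$ at $|w|=|z|$. That choice is precisely what confines the potentially singular behaviour near $w=0$ to a region where one of the two density factors can be discarded via the substitution, leaving the genuinely logarithmically divergent part of $\int dw/|w|$ inside the middle annulus $\{|z|\le|w|\le1\}$, which is exactly where the term $2b^2|\log|z||$ originates.
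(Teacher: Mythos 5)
Your proposal is correct and follows essentially the same route as the paper: the same three-way split of the $w$-integral at $|w|=|z|$ and $|w|=1$ when $|z|\le 1$, with the substitution $u=z/w$ (preserving $dw/|w|$) handling the region near $w=0$ and the middle annulus producing the $2b^2|\log|z||$ term. The only cosmetic difference is that for $|z|\ge1$ you split at $|w|=1$ rather than at $|w|=|z|$, which changes nothing of substance.
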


\begin{proof}
Assume first that $|z| \le 1$. Write
\begin{align}\label{int splitting z small}
& \nonumber f_{X\cdot Y}(z) = \int_{-\infty}^{\infty}f_{X, Y}\left(w, \frac z w\right)\frac{dw}{|w|}
\\ & = \int_{|w| \le |z| }f_{X, Y}\left(w, \frac z w\right)\frac{dw}{|w|}+ \int_{|z|\le |w| \le 1}f_{X, Y}\left(w, \frac z w\right)\frac{dw}{|w|}+ \int_{|w|\ge 1}f_{X, Y}\left(w, \frac z w\right)\frac{dw}{|w|}.
\end{align}

\smallskip

Since $X$ and $Y$ are independent, $f_{X, Y}(x, y)= f_X(x)\cdot f_Y(y)$. We estimate each term of \eqref{int splitting z small} separately. Assume first that $|z| \le 1$
\begin{align}
& \label{1st int}\int_{|w| \le |z| }f_X(w)\cdot f_{Y}\left(\frac z w\right)\frac{dw}{|w|} \le b\int_{|w| \le |z| }f_{Y}\left(\frac z w\right)\frac{dw}{|w|} = b \int_{|y|\ge 1}f_Y(y)\frac{dy}{|y|} \le b \\
& \label{2nd int}\int_{|z|\le |w| \le 1}f_X(w)\cdot f_{Y}\left(\frac z w\right)\frac{dw}{|w|} \le b^2\int_{|z|\le |w| \le 1}\frac{dw}{|w|} = 2b^2|\log (|z|)| \\ 
& \label{3rd int} \int_{|w| \ge 1 }f_X(w)\cdot f_{Y}\left(\frac z w\right)\frac{dw}{|w|} \le b\int_{|w|\ge 1}f_X(w)\frac{dw}{|w|} \le b.
\end{align}

\smallskip

Plugging \eqref{1st int}, \eqref{2nd int} and \eqref{3rd int} into \eqref{int splitting z small}, the result follows for $|z| \le 1$.

\smallskip

Now, if $|z| \ge 1$, then write 
\begin{align}\label{int splitting z large}
\nonumber f_{X\cdot Y}(z) & = \int_{-\infty}^{\infty}f_{X, Y}\left(w, \frac z w\right)\frac{dw}{|w|}
\\ & = \int_{|w| \le |z| }f_X(w)\cdot f_{Y}\left(\frac z w\right)\frac{dw}{|w|} + \int_{|w| \ge |z| }f_X(w)\cdot f_{Y}\left(\frac z w\right)\frac{dw}{|w|}.
\end{align}

\smallskip

For the first term, we have
\begin{align} \label{4th int}
\int_{|w| \le |z| }f_X(w)\cdot f_{Y}\left(\frac z w\right)\frac{dw}{|w|} \le b \int_{|y|\ge 1}f_Y(y)\frac{dy}{|y|} \le b .
\end{align}

\smallskip

And, for the second, by \eqref{3rd int}
\begin{align} \label{5th int}
\int_{|w| \ge |z| }f_X(w)\cdot f_{Y}\left(\frac z w\right)\frac{dw}{|w|} \le \int_{|w| \ge 1 }f_X(w)\cdot f_{Y}\left(\frac z w\right)\frac{dw}{|w|} \le b.
\end{align}

\smallskip

Plugging \eqref{4th int} and \eqref{5th int} into \eqref{int splitting z large}, the result follows.
\end{proof}

\smallskip

Using Proposition \ref{prop prod}, we immediately obtain the following: 

\begin{cor}\label{cor density}
Let $X$ and $Y$ be independent continuous random variables. Then for every $t \in (0, 1)$ and every $\gamma \in \R $, 
\begin{align*}
\p \Big[ |X\cdot Y +\gamma| < t\Big] \le 4bt + 4b^2t(1+|\log t|),
\end{align*}
where $b>0$ is a uniform upper bound on their densities.
\end{cor}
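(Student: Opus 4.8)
The plan is to feed the pointwise density estimate of Proposition \ref{prop prod} into the identity
$$\p\big[|X\cdot Y+\gamma|<t\big]=\int_{\gamma-t}^{\gamma+t}f_{X\cdot Y}(z)\,dz.$$
Both branches of the bound in Proposition \ref{prop prod} are captured by the single inequality $f_{X\cdot Y}(z)\le 2b+2b^2\,|\log|z||\,\mathbbm{1}_{\{|z|\le1\}}$ valid for all $z\in\R$ (at $|z|=1$ the logarithmic term vanishes, so the two pieces match up). Integrating this over the window $(\gamma-t,\gamma+t)$, which has length $2t$, gives at once
$$\p\big[|X\cdot Y+\gamma|<t\big]\le 4bt+2b^2\int_{\gamma-t}^{\gamma+t}|\log|z||\,\mathbbm{1}_{\{|z|\le1\}}\,dz,$$
so everything reduces to bounding the last integral by $2t(1+|\log t|)$, uniformly in $\gamma$.

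For this I would use the observation that $\phi(z)\dfn|\log|z||\,\mathbbm{1}_{\{|z|\le1\}}$ is an even, non-negative function that is non-increasing in $|z|$ --- it drops from $+\infty$ at the origin to $0$ at $|z|=1$ and then stays $0$ --- i.e.\ it is symmetric-decreasing. By the Hardy--Littlewood rearrangement inequality (equivalently, the bathtub principle), among all sets of measure $2t$ the integral of $\phi$ is maximized by the interval $(-t,t)$ centred at the origin; in particular $\int_{\gamma-t}^{\gamma+t}\phi\le\int_{-t}^{t}\phi$. Since $t\in(0,1)$, a one-line computation then gives $\int_{-t}^{t}\phi(z)\,dz=2\int_0^t(-\log z)\,dz=2\big[z-z\log z\big]_0^t=2t(1-\log t)=2t(1+|\log t|)$, and substituting back yields the claimed bound $4bt+4b^2t(1+|\log t|)$.

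The only step that is not a mechanical substitution is the reduction to the centred window. Invoking rearrangement disposes of it instantly, but one can also argue elementarily: by the symmetry of $\phi$ one may assume $\gamma\ge0$, and then check the three cases $\gamma\le t$, $t<\gamma<1+t$, and $\gamma\ge1+t$, using that $-\log z$ is positive and decreasing on $(0,1)$ to see that sliding the window toward $0$ only increases $\int\phi$. I expect this (mild) bookkeeping, rather than any genuine analytic difficulty, to be the main obstacle; the hypothesis $t\in(0,1)$ is what makes $|\log t|=-\log t$ in the evaluation and keeps the centred window $(-t,t)$ inside $(-1,1)$ where the logarithmic part of the density bound is active.
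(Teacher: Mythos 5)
Your proposal is correct and follows essentially the same route as the paper: the paper likewise bounds the density by the symmetric-decreasing majorant $g(z)=(2b+2b^2|\log|z||)\mathbbm 1_{\{|z|\le1\}}+2b\mathbbm 1_{\{|z|>1\}}$, uses its monotonicity in $|z|$ to replace the window $(\gamma-t,\gamma+t)$ by the centred window $(-t,t)$, and evaluates the same integral to get $4bt+4b^2t(1+|\log t|)$. The only cosmetic difference is that you name the rearrangement/bathtub principle where the paper just states the monotonicity of $g$ directly.
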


\begin{proof}
Note that the function 
$$ g(z) = \left(2b+2b^2|\log(|z|)|\right)\mathbbm 1_{\{|z|\le1\}}+2b\mathbbm 1_{\{|z|>1\}}$$
satisfies $g(|z_1|) \le g(|z_2|)$ whenever $|z_1| \ge |z_2|$. Thus, we have for every $\gamma \in \R$, $t\in(0,1)$,
\begin{align*}
\int_{\gamma-t}^{\gamma+t}g(z)dz & \le \int_{-t}^{t}g(z)dz = \int_{-t}^t \left(2b+2b^2|\log(|z|)|\right)dz = 4bt +4b^2t(1+|\log t |).
\end{align*}
Thus, by Proposition~\ref{prop prod} we have 
\begin{align*}
\p \Big[ |X\cdot Y-\gamma| <t\Big] & \le \int_{\gamma-t}^{\gamma+t}g(z)dz \le 4bt +4b^2t(1+|\log t |).
\end{align*}
\end{proof}

\smallskip

We also obtain the following corollary.
\begin{cor}
Let $T = \{T_{i, j}\}_{i, j \le 2}$ be a random matrix such that $T_{1, 1}$ and $T_{2, 2}$ are continuous random variables, each independent of all the other entries of $T$. Then for every $t\in(0,1)$
\begin{align*}
\p \Big[{|\det(T)|^{1/2}} \le t\Big] \le 4bt^2 + 4b^2t^2(1+2|\log t|), 
\end{align*}
where $b>0$ is a uniform upper bound on the densities of $T_{1, 1}$, $T_{2, 2}$. 
\end{cor}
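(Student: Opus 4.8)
The plan is to reduce the $2\times 2$ determinant to a product of two independent continuous random variables plus a constant, and then invoke Corollary~\ref{cor density}. First I would write
$$
\det(T) = T_{1,1}T_{2,2} - T_{1,2}T_{2,1},
$$
and condition on the $\sigma$-algebra generated by the off-diagonal entries $T_{1,2}$ and $T_{2,1}$. Under this conditioning $c \dfn T_{1,2}T_{2,1}$ is a constant, while the hypotheses (that $T_{1,1}$ is independent of $(T_{1,2},T_{2,1},T_{2,2})$ and $T_{2,2}$ is independent of $(T_{1,1},T_{1,2},T_{2,1})$) should give, after a short conditional-independence argument, that conditionally on $(T_{1,2},T_{2,1})$ the variables $T_{1,1}$ and $T_{2,2}$ are independent continuous random variables whose densities are still bounded by $b$.

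Next I would apply Corollary~\ref{cor density} conditionally, with $X = T_{1,1}$, $Y = T_{2,2}$, $\gamma = -c$, and threshold $t^2$ in place of $t$. Since $t\in(0,1)$ we have $t^2\in(0,1)$, so the hypothesis is met, and the corollary yields
$$
\p\Big[\,|T_{1,1}T_{2,2}-c| < t^2 \ \Big|\ T_{1,2},T_{2,1}\,\Big] \le 4bt^2 + 4b^2 t^2\big(1 + |\log(t^2)|\big) = 4bt^2 + 4b^2 t^2(1 + 2|\log t|).
$$
Because $T_{1,1}T_{2,2}$ has a density (Proposition~\ref{prop prod}), the events $\{|\det(T)|\le t^2\}$ and $\{|\det(T)|< t^2\}$ coincide up to a null set, so the same bound holds with $\le$. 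Taking expectations over $(T_{1,2},T_{2,1})$ and using $\{|\det(T)|^{1/2}\le t\} = \{|\det(T)|\le t^2\}$ then gives the claimed inequality.

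The only step that needs genuine care is the conditional-independence reduction: the hypothesis "each diagonal entry is independent of all the other entries" is a priori weaker than joint independence of $(T_{1,1},T_{2,2})$ given $(T_{1,2},T_{2,1})$. One checks it by noting that $T_{1,1}\perp(T_{1,2},T_{2,1},T_{2,2})$ forces the conditional law of $T_{1,1}$ given $(T_{1,2},T_{2,1},T_{2,2})$ to equal its unconditional (continuous, $b$-bounded) law, and then a one-line computation shows that the conditional joint law of $(T_{1,1},T_{2,2})$ given $(T_{1,2},T_{2,1})$ factors. Everything after that is a direct substitution into the already-proved Corollary~\ref{cor density}.
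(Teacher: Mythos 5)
Your proposal is correct and follows essentially the same route as the paper: condition so that $T_{1,2}T_{2,1}$ becomes a constant $\gamma$, apply Corollary~\ref{cor density} to the product $T_{1,1}T_{2,2}$, and substitute $t^2$ for $t$. If anything, your version is slightly more careful than the paper's write-up, which conditions on $T_{1,1},T_{2,2}$ where it evidently means $T_{1,2},T_{2,1}$, and which does not spell out the conditional-independence reduction or the $\le$ versus $<$ point that you handle explicitly.
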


\begin{proof}
We have, 
\begin{align*}
\p \Big[ |\det(T)| \le t\Big] & = \p \Big[ |T_{1, 1}\cdot T_{2, 2}- T_{1, 2}\cdot T_{2, 1}| \le t\Big]
\\ & = \E \Bigg[ \p \Big[|T_{1, 1}\cdot T_{2, 2}- T_{1, 2}\cdot T_{2, 1}| \le t \Big| T_{1, 1}, T_{2, 2}\Big]\Bigg]
\\ & \le \sup_{\gamma\in \R } \p \Big[ |T_{1, 1}\cdot T_{2, 2} +\gamma| < t\Big] 
\\ & \le 4bt + 4b^2t(1+|\log t|),
\end{align*}
where in the last inequality we used Corollary~\ref{cor density}. Replacing $t$ by $t^2$, the result follows.
\end{proof}

\smallskip

{\bf Acknowledgements.} We thank Alexander Litvak and Nicole Tomczak-Jaegermann for helpful discussions and comments.

\end{document}